\documentclass[reqno]{amsart}

\usepackage{amssymb,amsmath,amsfonts,amsthm}
\usepackage[all]{xy}
\usepackage{enumerate}
\usepackage{mathrsfs}
\usepackage{graphicx}


\usepackage{url} 
\newtheorem{pro}{Proposition}
\newtheorem{lem}[pro]{Lemma}
 
\newtheorem{theo}[pro]{Theorem}

\theoremstyle{definition}

\newtheorem{exem}[pro]{Example}
\newtheorem{ques}[pro]{Question}


\newcommand{\Z}{\mathbb{Z}}

\newcommand{\N}{\mathbb{N}}
\newcommand{\PP}{\mathbb{P}}
\newcommand{\VI}{\mathcal{B}}
\newcommand{\VJ}{\mathcal{Q}}
\newcommand{\point}{\mathfrak{p}}
\newcommand{\RI}{\mathcal{R_I}}
\newcommand{\SI}{\mathcal{S_I}}
\newcommand{\m}{\mathfrak{m}}
\newcommand{\Y}{\mathcal{Y}}
\newcommand{\ZC}{\mathcal{Z}}

\newcommand{\sat}{{\operatorname{sat}}}
\newcommand{\reg}{{\operatorname{reg}}}
\newcommand{\indeg}{{\operatorname{indeg}}}

\newcommand{\Hom}{{\operatorname{Homgr}}}
\newcommand{\codim}{{\operatorname{codim}}}
\newcommand{\Proj}{{\operatorname{Proj}}}
\newcommand{\Spec}{{\operatorname{Spec}}}

\newcommand{\Sym}{{\operatorname{Sym}}}
\newcommand{\Rees}{{\operatorname{Rees}}}

\newcommand{\regu}{{\operatorname{reg}}}

\newcommand{\pmt}[1]{\begin{pmatrix}#1\end{pmatrix}}

\def\ff{{\bf f}} 
\begin{document}

\title[]{Bound for the number of one-dimensional fibers of a projective morphism}


\author{Quang Hoa Tran}
\address {College of Education, Hue University, 34 Le Loi St., Hue City, Vietnam \& Institut de Math\'ematiques de Jussieu, Universit\'e Pierre et Marie Curie, Boite 247, 4 place Jussieu, F-75252 Paris Cedex 05, France}
\email{quang-hoa.tran@imj-prg.fr}

\date{\today}

\maketitle

\begin{abstract} Given a birational parameterization $\phi: \PP_k^2 \dashrightarrow \PP_k^3$ of an algebraic surface $\mathscr S\subset \PP_k^3$, we bound the number of 1-dimensional fibers of the canonical projection of the graph of $\phi$ onto its image. 
\end{abstract}
{\small Keyword: \emph{Fibers of morphism, implicitization, implicit equation,  parameterization surfaces, elimination theory, approximation complex,  geometric modeling.}}

\section{Introduction}
Find the implicit equation for a rational curve or surface starting from a parameterization is a central problem in geometric modeling, it is called the implicitization problem. Computing the implicit equation can be solved by computing a suitable Gro\"obner basis. However, it is known to be quite slow in practice and hence is rarely used in geometric modeling (see, e.g., \cite{Hoffmann89}). An other method for finding the implicit equation is to eliminate by computing the resultant of the polynomials (see, e.g., \cite{Buse2001, Jouanolou96}). But in many applications, the resultant vanishes identically due to the presence of base points, which are the source points where the parameterization is not well defined.  Instead, the method of moving curves and surfaces was introduced by Sederberg and Chen \cite{Tom-Chen95} for the case of parameterized curves, and then was regularly extended for the case of surfaces, e.g. \cite{BCD2003,Cox-Goldman-Zhang2000,Lai-Chen16},\ldots. More recently, by using the approximation complexes, which were defined by Herzog, Simis and Vasconcelos in \cite{HSV82,HSV1983}, a new method for computing implicitizations  as the determinant of the approximation complexes have been described by in \cite{Buse-Jouanolou03,Buse-Chardin05,Chardin2006, Buse-Chardin-Jouanolou09},\ldots. \\

Besides the computation of implicit representations of parameterized curves or surfaces, in geometric modeling it is of interest to determine the singularities of the parametrization, that is, to know whether a point is being painted twice, or not. It is important to give a computationally efficient criterion to answer this question.  The matrix representations are also useful for solving this problem (see, e.g., \cite{Botbol-Buse_Chardin14,Buse2014}).\\

The main goal of this paper is to estimate the number of the particulier points on a surface. More precisely, let $k$ a field and let $\phi$ be a parameterization from $\PP_k^2$ to $ \PP_k^3$. Let $\Gamma\subset \PP_k^2\times \PP_k^3$ be the closure of the graph of $\phi$ and $\pi_2: \PP_k^2\times \PP_k^3\longrightarrow \PP_k^3$ the projection. For every  $\point\in \PP_k^3$, the fiber of $\pi_2$ at $\point$ is the subscheme 
$$ \pi_2^{-1}(\point):=\Gamma \times_ {\PP_k^3} \Spec(k(\point))\subset \PP_{k(\point)}^2$$
where $k(\point)$ its residue field. We denote by 
 $$\Y:=\{\point\in \PP_k^3\, \mid\, \dim \pi_2^{-1}(\point)=1\}.$$
 
In this paper we will propose a method to estimate the cardinality of $\Y$.  For any $\point\in \Y$, we denote by $h_\point\in R$ the defining equation of the unmixed component of the fiber $\pi_2^{-1}(\point)$. Our main results are the following.\\

\noindent \textbf{Theorem.} \textit{Let $I=(f_0,\ldots,f_3)$ be a homogeneous ideal of $R:=k[X_1,X_2,X_3]$ generated by four homogeneous polynomials of  the same degree $d$,  not all zero. Let $I^\sat:=I\colon_R(X_1,X_2,X_3)^\infty$ be the saturation of $I$ and $\mu=\inf \{\nu \mid I_\nu^\sat\neq 0\}.$ Suppose that $\VI=\Proj(R/I)$ is locally a complete intersection of dimension zero.
\begin{enumerate}
\item[(i)] If $\mu <d,$ then 
$\sum_{\point\in \Y}\deg(h_\point)\leq \mu.$
\item[(ii)] If $\mu=d$, then
\begin{displaymath}
\sum_{\point\in \Y}\deg(h_\point)\leq \begin{cases} 
4  & \text{if}  \quad d=3,\\
\left\lfloor \frac{d}{2} \right \rfloor  d-1 &\text{if} \quad d\geq 4.
\end{cases}
\end{displaymath}
\end{enumerate}}

\section{Preliminaries}
Let $R:=k[X_1,X_2,X_3]$ be a polynomial ring over the field $k$  with its standard $\Z$-grading $\deg(X_i)=1$ for all $i=1,2,3$. Suppose we are given an integer $d\geq 1$ and four homogeneous polynomials $f_0,\ldots,f_3\in R_d,$ not all zero. We denote by $I$ the ideal of $R$ generated by these polynomials and set $\VI:=\Proj(R/I)\subseteq \PP_k^2:=\Proj(R)$. Let $B:=k[T_0,T_1,T_2,T_3]$ with its standard grading and consider the rational map
\begin{align*}
\phi:\quad &\PP_k^2  -\dashrightarrow \PP_k^3\\
& \; x \longmapsto (f_0(x):\cdots: f_3(x)).
\end{align*}

We will always assume throughout this paper that $\phi$ is generically finite onto its image, or equivalently that the closed image of $\phi$ is a surface $\mathscr{S}$ in $\PP_k^3:=\Proj(B)$, and that the base locus $\VI$ of $\phi$ is supported on a finite set of points.\\

Let $\Gamma_0\subset \PP_k^2\times \PP_k^3$ be the graph of $\phi: \PP_k^2\setminus\VI \longrightarrow \PP_k^3$ and $\Gamma$ the Zariski closure of $\Gamma_0$. We have the following diagram
$$ \xymatrix@1{\Gamma\ \ar[d]_{\pi_1}\ar[rd]^{\pi_2}\ar@{^(->}[r] & \PP_k^2\times \PP_k^3 \\  \PP^2_k \ar@{-->}[r]_\phi& \PP^3_k}	$$
where the maps $\pi_1,\pi_2$ are the canonical projections. One has
$$\mathscr{S}=\overline{\pi_2(\Gamma_0)}=\pi_2(\Gamma)$$
where the bar denotes the Zariski closure. \\

Furthemore $\Gamma$ is the irreducible subscheme of $\PP_k^2\times \PP_k^3$ defined by the Rees algebra $\RI:=\Rees_R(I)$ (see \cite[Chapter II, \S 7]{Hartshorne77}). Let $S:=R\otimes_k B= R[T_0,\ldots,T_3]$ with the standard bigraded structure by the canonical grading $\deg(X_i)=(1,0)$ and $\deg(T_j)=(0,1)$ for all $i=1,2,3$ and $j=0,\ldots,3$. The natural bigraded morphism of bigraded $k$-algebras
\begin{align*}
\alpha:\quad & S \longrightarrow  \RI=\oplus_{s\geq 0} I(d)^s=\oplus_{s\geq 0} I^s(sd)\\
&  T_i \longmapsto f_i
\end{align*}
 is onto and corresponds to the embedding $\Gamma \subset \PP_k^2\times \PP_k^3$.
 
Let $\mathfrak{P}$ be the kernel of $\alpha$. Then it is a homogeneous ideal of $S$ and the part of degree one of  $\mathfrak{P}$ in $T_i$, denoted by  $\mathfrak{P}_1=\mathfrak{P}_{(\ast, 1)},$  is the module of syzygies of the  $f_i$
$$a_0T_0 +\cdots+a_3T_3 \in \mathfrak{P}_1\Longleftrightarrow a_0f_0+\cdots+a_3f_3=0.$$

Set $\SI:=\Sym_R(I)$ for the symmetric algebra of $I$. The  natural bigraded epimorphisms
\begin{align*}
S\longrightarrow S/(\mathfrak{P}_1)\simeq \SI \text{\quad and\quad} \SI\simeq S/(\mathfrak{P}_1)\longrightarrow S/\mathfrak{P}\simeq \RI
\end{align*}
correspond to the embeddings of schemes $ \Gamma \subseteq V\subset \PP_k^2\times \PP_k^3$
where $V$ is the projective scheme defined by $\SI$.\\

 Let $K$ be defined by the following exact sequence 
  $$0\longrightarrow K\longrightarrow \SI\longrightarrow \RI\longrightarrow 0.$$
  Notice that the module $K$ is supported in $\VI$ because $I$ is locally trivial outside $\VI$. \\

 Let us now denote by  $I^{\sat}$ the ideal 
 $$I:_R \m^\infty=\bigcup_{n\in \N}I:_R \m^n$$
  where $\m:=(X_1,X_2,X_3)$ is the irrelevant maximal ideal of $R$.\\
 
 Finally, we recall that for a nonzero $\Z$-graded $R$-module of finite type $M$, its initial degree is defined by
 $$\indeg(M):=\inf \{ \nu  \mid  M_\nu\neq 0\},$$
 with the convention $\indeg(M)=+\infty$ when $M=0$.
\section{Fibers of the canonical projection of the graph of $\phi$ onto $\mathscr{S}$} 
 From now on we assume that the ideal $I$ is locally a complete intersection outside $V(\m)$. Under this hypothesis, the symmetric algebra $\SI$  is projectively isomorphic to the Rees algebra $\RI$ (see for instance \cite[Theorem 4]{Buse-Chardin05}). In other words, the graph $\Gamma\subset \PP_k^2\times \PP_k^3$ of the parameterization $\phi$ is equal to  the subscheme $V$ of $\PP_k^2\times \PP_k^3$ defined by $\SI$. For every  $\point\in \PP_k^3,$ we will denote by $k(\point)$ its residue field $B_\point/\point B_\point$. The fiber of $\pi_2$ at $\point$ is the subscheme 
$$ \pi_2^{-1}(\point)=\text{Proj}(\RI\otimes_B k(\point))=\text{Proj}(\SI\otimes_B k(\point))\subset \PP_{k(\point)}^2.$$

In \cite{Botbol-Buse_Chardin14}, it is shown that $\pi_2$ only admits fibers of dimension 0 or 1.  Furthermore, it is easily seen that the number of 1-dimensional fibers is finite. Our purpose is to bound the cardinality of the set of points in $\PP_k^3$ with a 1-dimensional fiber, that is, the cardinality of the set 
$$\Y=\{\point\in \PP_k^3\, \mid\, \dim \pi_2^{-1}(\point)=1\}.$$

For any $\point\in \Y$, we will denote by $h_\point\in R$ the defining equation of the unmixed component of the fiber $\pi_2^{-1}(\point)$. We begin with an important remark in this direction.
\begin{pro}\label{Proposition4.1}
If there exists an integer $s$ such that $\nu=\indeg((I^s)^\sat)<sd$, then
$$\sum_{\point\in \Y}\deg(h_\point)\leq \nu<sd.$$
\end{pro}
\begin{proof} 
First notice that  $\Y$ is finite. By \cite[Lemma 10]{Botbol-Buse_Chardin14}, there exists a homogeneous polynomial $f\in I$ of degree $d$ such that for any $\point\in \Y$, we have
$$I=(f)+h_\point (g_{1\point}, g_{2\point},g_{3\point})\quad \text{and}\quad I^\sat \subseteq (f,h_\point)$$
for some $ g_{1\point}, g_{2\point},g_{3\point}\in R$. \\
	
Since $(f,h_\point)$ is a complete intersection ideal, it follows from \cite[Lemma 5, Appendix 6]{Zariski1960} that $(f,h_\point)^s$ is unmixed, hence saturated. Therefore for all $\point\in \Y$, we have
$$(I^s)^\sat\subseteq ((I^\sat)^s)^\sat \subseteq ((f,h_\point)^s)^\sat=(f,h_\point)^s=(f^s,f^{s-1}h_\point,\ldots, h_\point^s).$$
	
If $F\in (I^s)^\sat$  such that $\deg(F)=\nu<sd$, then $h_\point$ is a divisor of $F$. Moreover, if $\point\neq \point'$ in $\Y$, then $\gcd(h_\point,h_{\point'})=1.$ We deduce that
$$\prod_{\point\in \Y}h_\point \mid F$$
which gives
$$\sum_{\point\in \Y}\deg(h_\point)\leq \deg(F)=\nu<sd.$$
\end{proof}
 In particulier if $\indeg (I^\sat)=\delta< d$, by Proposition~\ref{Proposition4.1}, then
$$\sum_{\point\in \Y}\deg(h_\point)\leq \delta <d.$$

From now on we want to bound the cardinality of the set $\Y$, the delicate case is when the ideal $I$ satisfies $\indeg (I^\sat)=\indeg(I)= d.$ 

\begin{exem} Consider the parameterization given by
\begin{align*}
f_0&= X_2^2X_3^4-X_2^4X_3^2,\\
f_1&=X_1^4X_3^2-X_3^6,\\
f_2&=X_1^2X_2^2X_3^2-X_1^2X_2^4,\\
f_3&=X_1^4X_2^2-X_2^2X_3^4.
\end{align*}
Using {\tt Macaulay2} \cite{Macaulay2}, we see that $I$ is a saturated ideal
 with $\VI$ locally a complete intersection of dimension zero. Since $\indeg( (I^2)^\sat)= 8< 2.6=12,$ Proposition~\ref{Proposition4.1} shows that
$$\sum_{\point\in \Y}\deg(h_\point)\leq 8.$$
Precisely, $I$ admits a minimal free resolution of the form
$$\xymatrix {0\ar[r] &R(-8)^3\ar[r]^{ M}& R(-6)^4\ar[r]&R\ar[r]&R/I\ar[r]&0}$$
where the matrix $M$ is given by
$$\pmt{-X_3^2 & 0 & X_1^2\\0 & X_2^2& 0 \\ X_1^2& 0 & -X_3^2\\ -X_2^2+X_3^2& -X_3^2 & 0}.$$
We thus get $\Y=\{\point_1,\point_2,\point_3,\point_4\}$ with
\begin{align*}
\point_1& =(0:1:0:1), \;\;\quad \qquad h_{\point_1}=X_2^2-X_3^2,\quad\quad\point_2=(1:0:1:0),\;\quad h_{\point_2}=X_1^2-X_3^2,\\
\point_3 & =(1:0:-1:0), \;\;\;\quad \quad h_{\point_3}  =X_1^2+X_3^2,\qquad \point_4=(0:1:0:0),\;\quad h_{\point_4}=X_2^2,
\end{align*}
which implies
$$\sum_{\point\in \Y}\deg(h_\point)=8.$$
\end{exem}

\section{Estimation of number of one-dimensional fibers of  the canonical projection of the graph of $\phi$ onto $\mathscr{S}$}

As we already noticed  that 
$\sum_{\point\in \Y}\deg(h_\point)< d$
if $\indeg(I^\sat)<d,$ we assume that $\indeg(I^\sat)=\indeg(I)=d.$\\

Recall that $\VI=$Proj$(R/I)$ is assumed to be locally a complete intersection  of dimension zero. Under this hypothesis, the module $K$ is supported in $\m S$, hence $H_\m^i(K)=0$ for any $ i\geq 1.$ We deduce that
$$H_\m^i(\SI)\simeq H_\m^i(\RI),\, \forall i\geq 1.$$
Here we denote the $i$-th local cohomology modules of a $R$-module $M$ with support in the irrelevant maximal ideal $\m=(X_1,X_2,X_3)$ by $H_\m^i(M).$ We set
$$a_i(M):=\sup \{\mu\ \mid \ H_\m^i(M)_\mu\neq 0  \},$$
with the convention $a_i(M):=-\infty$ when $H_\m^i(M)=0.$ Recall that the Castelnuovo-Mumford regularity of $M$ is defined by
$$\reg(M):=\max_i\{ a_i(M)+i  \}.$$

Let $K_\bullet:= K_\bullet(\ff;R)$ be the Koszul complex associated to the sequence of polynomials $\ff:=(f_0,f_1,f_2,f_3)$ in $R$. We denote by $Z_i:=Z_i(\ff;R), B_i:=B_i(\ff;R), H_i:=H_i(\ff;R)=Z_i/B_i$ the module of cycles, the module of boundaries and the homology module in degree $i$, respectively (see  \cite[\S4]{Buse-Jouanolou03} or \cite[\S3]{Buse-Chardin05} for an introduction to these complexes in this context). Since the ideal $I$ is homogeneous, these modules inherit a natural structure of graded $R$-modules. Let $\mathcal{Z}_\bullet$ be the approximation complex of cycles associated to the ideal $I=(\ff)$. By definition $\mathcal{Z}_q=Z_q[qd]\otimes_R R[T_0,\ldots,T_3](-q)$ for all $q=0,\ldots, 3$. This complex is of the form
\begin{align*}
\xymatrix{
(\mathcal{Z}_\bullet):& 0\ar[r] &\mathcal{Z}_3\ar[r] ^{v_3}&\mathcal{Z}_2\ar[r]^{v_2}& \mathcal{Z}_1\ar[r]^{v_1\qquad\qquad }&\mathcal{Z}_0=R[T_0,\ldots,T_3]\ar[r]^{\;\quad\qquad v_0}&0 }
\end{align*}
where $v_1(a_0,a_1,a_2,a_3)=a_0T_0+\cdots+a_3T_3$. It is proved in \cite[Theorem 4]{Buse-Chardin05} that  the complex  $(\mathcal{Z}_\bullet)$ is acyclic under our hypothesis. Thus ($\mathcal{Z}_\bullet)$ is a resolution of  $H_0(\mathcal{Z}_\bullet)\simeq \SI.$

\begin{theo} \label{Theorem4.2} 
Let $I=(f_0,\ldots,f_3)$ be a homogeneous ideal of $R$ such that $\indeg(I^\sat)=\indeg(I)=d.$ Suppose that $\VI=\Proj(R/I)$ is locally a complete intersection of dimension zero.  Then there exists a complex $ (C_\bullet)$ of free $B$-modules
\begin{align*}
\xymatrix@R=8pt{0\ar[r]& B(-3)^l \ar[r]\ar@{=}[d]& B(-2)^m\ar[r] \ar@{=}[d]& B(-1)^n\ar@{=}[d]\ar[r]& 0\\ & C_3& C_2& C_1&} 
\end{align*}
where $ n=\dim_k H_\m^3(Z_1)_{d-1},\; m=\dim_k H_\m^3(Z_2)_{2d-1},\; l=\dim_k H_\m^3(Z_3)_{3d-1}$ with homology
\begin{align*}
H_1(C_\bullet)\simeq \oplus_{s\geq 0} H_\m^2(I^s)_{sd-1},\quad
H_2(C_\bullet)\simeq \oplus_{s\geq 0}(I^s)^\sat_{sd-1},\quad
H_3(C_\bullet)=0.
\end{align*}
\end{theo}

\begin{proof} 
One has the following graded (degree zero) isomorphisms of $R$-modules (see \cite[Lemma 1]{Buse-Chardin05}):
\begin{equation} \label{Equation4.1}
H_\m^p(Z_q)\simeq \begin{cases}
0& \text{for}\,\, p=0,1\\ H_m^0(H_{p-q})^\ast [3-4d]& \text{for} \,\,p=2\\
Z_{p-q}^\ast [3-4d]&  \text{for} \,\,p=3 
\end{cases}
\end{equation}
where $-^\ast:=\Hom_R(-,k).$\\
	
We  consider the two spectral sequences associated to the double complex $C_{q}^p=C_\m^p(\ZC_q),$ where $C_\m^\bullet(M)$ denotes the \v Cech complex on $M$ relatively to the ideal $\m$. Since $(\ZC_\bullet)$ is acyclic, one of them abuts at step two with:
\begin{equation} \label{Equation4.4}
_\infty\!^h\textbf{E}_{q}^p= \,_2^h\textbf{E}_{q}^p= \begin{cases}  H_\m^p(\SI),\quad \text{for} \quad & q=0\\ 0 ,\qquad\qquad \text{for}  \quad & q\neq 0.\end{cases}
\end{equation}
The other one gives at step one:
$$^{v}_1 \textbf{E}_{q}^p=H_\m^p (\ZC_q)=H_\m^p (Z_q)[qd]\otimes_R R[T_0,\ldots,T_3](-q)=H_\m^p (Z_q)[qd]\otimes_k B(-q).$$
	
Notice that $H_\m^p(Z_q)=0$ for $p=0,1$; $Z_3\simeq R[-4d]$ and $Z_0=R$. Therefore the first page of the vertical spectral sequence has only two nonzero lines
\[\xymatrix@C=10pt{ 0\ar[r]& H_\m^2(Z_2)[2d]\otimes_k B(-2)\ar[r]& H_\m^2(Z_1)[d]\otimes_k B(-1)\ar[r]&0\\
H_\m^3(Z_3)[3d]\otimes_k B(-3)\ar@{->}[r]& H_\m^3(Z_2)[2d]\otimes_k B(-2)\ar@{->}[r]& H_\m^3(Z_1)[d]\otimes_k B(-1)\ar@{->}[r]& H_\m^3(Z_0)\otimes_kB.	}
\]
		
In bidegree $(-1,\ast)$, we have $ H_\m^3(Z_0)_{-1}\otimes_k B=H_\m^3(R)_{-1}\otimes_k B=0.$ Therefore, we obtain the complex $(C_\bullet)$ of free $B$-modules
\begin{align*}
\xymatrix@R=8pt{0\ar[r]& B(-3)^l \ar[r]\ar@{=}[d]& B(-2)^m\ar[r] \ar@{=}[d]& B(-1)^n\ar@{=}[d]\ar[r]& 0.\\ & C_3& C_2& C_1&} 
\end{align*}
			
Let us compute the homology of $(C_\bullet)$.  The exact sequence
\begin{equation}\label{Equation4.2}
0\longrightarrow Z_1\longrightarrow R[-d]^4\longrightarrow I\longrightarrow 0
\end{equation}
gives 
$$
H_\m^2(Z_1)\simeq H_\m^1(I)\simeq H_\m^0(R/I)\simeq I^\sat/I.
$$
 Since $\indeg(I^\sat)=\indeg(I)=d,$ we have
 \begin{equation} \label{Equation4.5}
 H_\m^2(Z_1)_{d-1}\simeq  (I^\sat/I)_{d-1}=0.
 \end{equation}
		
It follows from \eqref{Equation4.1} that
$ H_\m^2(Z_2)_{2d-1}\simeq H_\m^0(H_0)^\ast_{2-2d}\simeq \Hom_r((I^\sat/I)_{2d-2},k)$ as $H_\m^0(H_0)\simeq I^\sat/I.$
Since $\reg(I)\leq 2d-2$ (see \cite[Lemma 4]{Botbol-Buse_Chardin14}), we have $I_{2d-2}=(I^\sat)_{2d-2},$ thus 
 \begin{equation}\label{Equation4.6}
H_\m^2(Z_2)_{2d-1}=0.
 \end{equation}
		 
It follows from \eqref{Equation4.5} and \eqref{Equation4.6} that
$$   (^{v}_1 \textbf{E}_{q}^p)_{(-1,\ast)}=0\quad \text{unless} \quad p=3\; \text{ and} \; q\in \{1,2,3\}.$$  
Therefore, in bidegree $(-1,\ast)$,
\begin{equation} \label{Equation4.3}
_\infty\!^v\textbf{E}_{q}^p=\,^{v}_2 \textbf{E}_{q}^p= \begin{cases}  H_q(C_\bullet), &\text{if}\;  p=3 \ \text{and} \; q=1,2,3\\ 0 , & \text{else} .\end{cases}
\end{equation}
			
By comparing the two spectral sequences \eqref{Equation4.4} and \eqref{Equation4.3}, one has:
\begin{align*}
H_1(C_\bullet)&= H_\m ^2(\SI)_{(-1,\ast)} =\oplus_{s\geq 0} H_\m^2(I^s)_{sd-1},\\
H_2(C_\bullet)& =H_\m ^1(\SI)_{(-1,\ast)}= \oplus_{s\geq 0} H_\m ^1(I^s)_{sd-1}= \oplus_{s\geq 0} H_\m^0(R/I^s)_{sd-1}=\oplus_{s\geq 0}((I^s)^\sat/I^s)_{sd-1}\\
&=\oplus_{s\geq 0}(I^s)^\sat_{sd-1}\; \text{since}\ I_{sd-1}^s=0,\\
H_3(C_\bullet)& =H_\m^0 (\SI)_{(-1,\ast)}\subseteq (\SI)_{(-1,\ast)}\subseteq R_{-1}\otimes_k B=0.
\end{align*}
\end{proof}
	
We now give an expression of $m,n,l$ in terms of the degree $d$ and $\deg(\VI)$.
\begin{lem} \label{Lemma4.3} 
Under the assumptions of Theorem~\ref{Theorem4.2},
$$ n= \deg(\VI)-\frac{1}{2}d(d+1),\; m=\deg(\VI)-d \;\text{and}\; \; l=\frac{1}{2}d(d-1).$$
As a consequence,  $n-m+l=0.$
\end{lem}
\begin{proof}
We first compute $n=\dim_k H_\m^3(Z_1)_{d-1}.$
From \eqref{Equation4.2}, we derive an exact sequence
\begin{equation*}
0\longrightarrow H_\m^2(I)\longrightarrow H_\m^3(Z_1)\longrightarrow H_\m^3(R)[-d]^4\longrightarrow 0,
\end{equation*}
hence $H_\m^3(Z_1)_{d-1}\simeq  H_\m^2(I)_{d-1}.$ We get 
$$n=\dim_k H_\m^2(I)_{d-1}=\dim_k H_\m^1(R/I)_{d-1}. $$
We now consider an exact sequence
\begin{align*}
0\longrightarrow  R/I^\sat\longrightarrow \bigoplus_{\mu\in \Z}H^0(\PP_k^2,\mathcal{O}_\VI (\mu))\longrightarrow H_\m^1(R/I)\longrightarrow 0,
\end{align*}
hence
$$\dim_k H_\m^1(R/I)_{d-1}=\dim_k H^0(\PP_k^2, \mathcal{O}_\VI(d-1)) -\dim_k (R/I^\sat)_{d-1}=\deg(\VI)-\dim_k (R/I^\sat)_{d-1}.$$
Since $\indeg(I^\sat) =d,\, \dim_k (R/I^\sat)_{d-1}=\dim_k R_{d-1}=\binom{d+1}{2}=\frac{1}{2}d(d+1)$. It follows that
$$n=\deg(\VI)-\frac{1}{2}d(d+1).$$
		
We next compute
$m=\dim_k H_\m^3(Z_2)_{2d-1}.$ It follows from \eqref{Equation4.1} that
$$ H_\m^3(Z_2)_{2d-1}\simeq (Z_1)^\ast_{2-2d}\simeq \Hom_R((Z_1)_{2d-2},k).$$
Therefore, by \eqref{Equation4.2} we have
$$m=\dim_k (Z_1)_{2d-2}=\dim_k R_{d-2}^4 -\dim_k I_{2d-2}=4\binom{d}{2}-\dim_k I_{2d-2}.$$
Since $\dim_k I_{2d-2}=\dim_k R_{2d-2}-\dim_k(R/I)_{2d-2}=\binom{2d}{2}-\deg (\VI),$ it follows that
$$m=4\binom{d}{2}-\binom{2d}{2}+\deg (\VI) = \deg(\VI)-d.    $$
		
Finally, we have
$$l= \dim_k H_\m^3(Z_3)_{3d-1}=\dim_k H_\m^3(R)_{-d-1}=\binom{d}{2}=\frac{1}{2}d(d-1).$$
\end{proof}

Now we will give a bound for the degree of the projective variety $\VI\subset \PP_k^2$ in terms of the degree $d$. Let us introduce some classical notations.\\
	
For a finitely generated graded $R$-module $M$, we denote by $HF_M, HP_M$ its Hilbert function and its Hilbert polynomial, respectively. By definition, for every $\mu, HF_M(\mu)=\dim_k(M_\mu)$ and by the Grothendieck-Serre formula \cite[Theorem 4.3.5]{Bruns-Herzog93}
$$HP_M(\mu)=HF_M(\mu)-\sum_i(-1)^iHF_{H_\m^i(M)}(\mu).$$
	
\begin{lem} \label{Lemma4.4} 
Under the assumptions of Theorem~\ref{Theorem4.2},
$$\frac{1}{2}d(d+1)\leq\deg(\VI)\leq d^2-2d+3.$$
\end{lem}
\begin{proof}  
The Hilbert function of $R/I^\sat$  is weakly increasing, hence for any $\mu$
$$\dim_k(R/I^\sat)_\mu\leq \deg(\VI).$$
Since $\indeg(I^\sat)=d, \deg(\VI)\geq \dim_k (R/I^\sat)_{d-1}= \dim_k R_{d-1}=\frac{1}{2}d(d+1).$
		
 We can assume that $ k $ is infinite. Since $\codim(I)=2$, there are homogeneous polynomials $g_1,g_2\in I$ of degree $d$ which form a regular sequence. Therefore $\mathfrak{b}:=(g_1,g_2)$ is a complete intersection ideal. Setting $J:=\mathfrak{b}:_RI, J$ is saturated and $I^\sat=\mathfrak{b}:_RJ$ since $R$ is Gorenstein. We set $\VJ=\Proj(R/J)$. By using liaison theory, we obtain $\deg(\VI)+\deg(\VJ)=\deg(\mathfrak{b}),$ hence $ \deg(\VI)=d^2-\deg(\VJ).$
		
Local duality and the liaison sequence show that
$$H_\m^1(R/J)\simeq (I^\sat/\mathfrak{b})^\ast[3-2d].$$
We thus get
$$H_\m^1(R/J)_\mu\simeq \Hom_R((I^\sat/\mathfrak{b})_{2d-3-\mu},k).$$
Since $\indeg(I^\sat)=\indeg(\mathfrak{b})=d$,  we obtain
$$H_\m^1(R/J)_\mu =\begin{cases}
0,\qquad\qquad \qquad\qquad\qquad \qquad\text{if}\; \mu > d-3\\
\Hom_R((I^\sat/\mathfrak{b})_d,k)\neq 0,\quad \;\;\text{if} \; \mu=d-3.
\end{cases}$$
Moreover, we have $H_\m^0(R/J)=J^\sat/J=0.$ We therefore conclude that
$$\reg(R/J)=d-2,$$
and hence  $\dim_k(R/J)_{d-2}=\deg(\VJ).$\\
		
On the other hand, the Hilbert function $HF_{R/J}(\mu)$ of $R/J$  strictly increases from 1 in degree 0 until $\deg(\VJ)$ in degree $d-2$.  Therefore one has $\deg(\VJ)\geq d-1.$ By the Grothendieck-Serre formula, for any $\mu$
$$HF_{R/J}(\mu)=\deg(\VJ)-\dim_k H_\m^1(R/J)_\mu.$$
Since $\dim_k H_\m^1(R/J)_{d-3}=\dim_k(I^\sat/\mathfrak{b})_d\geq 2$, we deduce $\deg(\VJ)\geq d.$\\
		
The proof will be completed by showing that $\deg(\VJ)\geq 2d-3.$ Set $c_i=\dim_k(R/J)_i$ for all $ i\geq 0$. The sequence $\underline{c}=\{c_i\, :\, i\geq 0\}$ satisfies the following properties:
\begin{enumerate}
\item[(i)] $c_0=1$ and $c_1=3$. Indeed, if $c_1<3$, then $J_1\neq 0$, there is therefore a linear form $\ell\in J_1$. We assume that $J=(\ell,w_1,w_2,\ldots,w_s).$ Since $\regu(J)=d-1$, we have $\deg(w_i)\leq d-1$ for all $i=1,\ldots s.$ Fix $i\in \{1,\ldots, s\}$ such that $\gcd(\ell, w_i)=1$. Since $\VJ \subseteq V(\ell,w_i)$, hence $\deg(\VJ)\leq d-1,$ a contradiction. 
\item[(ii)] $c_i=\deg(\VJ)$ for any $i\geq d-2$ and $c_{d-2}-c_{d-3}=\dim_k H_\m^1(R/J)_{d-3}\geq 2.$	
\end{enumerate}
		
Since  $J$ is saturated and $k$ is infinite, there is a linear form $\ell$ in $R$ such that $J:\ell\simeq J(-1)$ as graded $R$-modules. It follows that $\ell$ induces an injection $\times \ell: (R/J)[-1]\longrightarrow  R/J.$ Therefore the Hilbert function of $R/(J+\ell)$ is $ \underline{b}=\{b_i\, :\, i\geq 0\}$ with $b_0=1$ and $b_i=c_i-c_{i-1}$ for every $i\geq 1.$ Since $ b_{d-2}\geq 2$,  as a consequence of Macaulay's Theorem  $b_i\geq 2,$ for any $i=1,\ldots, d-2.$ \\
		
Indeed, suppose that  there exists an integer $i\geq 1$ such that $b_i=1,$ and let
$$\nu=\sup_{i\geq 1}\{ i\, \mid\, b_i=1\}.$$
One has $2\leq \nu\leq d-3$ and since $R/(J+\ell)$ is a standard graded $k$-algebra,  Macaulay's Theorem (see \cite[Theorem 2.2]{Stanley1978}) shows that
$$2\leq b_{\nu+1}\leq b_\nu^{<\nu>}=1^{<\nu>}=1.$$
This shows that $c_{i+1}\geq c_i +2$ for all $i=0,\ldots, d-3,$ hence
$$\deg(\VJ)\geq 2(d-2)+1=2d-3.$$
\end{proof}
	
\begin{pro} \label{Proposition4.5}
Under the assumptions of Theorem~\ref{Theorem4.2}, the following holds for any integer $s$:
$$\dim_k H_\m^2(I^s)_{sd-1}= \frac{1}{2}s(s+1)\deg(\VI)-\frac{1}{2}sd(sd+1)+\dim_k (I^s)^\sat_{sd-1}.$$
\end{pro}
\begin{proof} 
By Theorem~\ref{Theorem4.2}, for any integer $s$, one has
\begin{align*}
\dim_kH_\m^2(I^s)_{sd-1}&=n.\dim_k B_{s-1}-m.\dim_k B_{s-2}+l.\dim_k B_{s-3}+ \dim_k (I^s)^\sat_{sd-1}\\
&=n\binom{s+2}{3}-(n+l)\binom{s+1}{3}+l\binom{s}{3}+\dim_k (I^s)^\sat_{sd-1}\\
&=	n \binom{s+1}{2}-l \binom{s}{2} +\dim_k (I^s)^\sat_{sd-1}\\
&=\big(\deg(\VI)-\frac{1}{2}d(d+1)\big)\binom{s+1}{2}-\frac{1}{2}d(d-1) \binom{s}{2} +\dim_k (I^s)^\sat_{sd-1},\quad \text{by Lemma~\ref{Lemma4.3}}\\
&=\frac{1}{2}s(s+1)\deg(\VI)-\frac{1}{2}sd(sd+1)+\dim_k (I^s)^\sat_{sd-1}.
\end{align*}
\end{proof}
	
\begin{theo}\label{Theorem4.6}
Let $I$ be a homogeneous ideal of $R$ such that $\indeg(I^\sat)=\indeg(I)=3.$ If $\VI=\Proj(R/I)$ is locally a complete intersection of dimension zero, then 
$$\sum_{\point\in \Y}\deg(h_\point)\leq 4.$$
\end{theo}
\begin{proof}
We will prove first that $I=I^\sat.$ In this case, $\deg(\VI)=6$ by Lemma~\ref{Lemma4.4}. Moreover, with the notations in the proof of Lemma~\ref{Lemma4.4}, we have
$$H_\m^1(R/I^\sat)\simeq (J/\mathfrak{b})^\ast[-3]$$
and $\regu(R/J)=1$. Hence $\dim_k(R/J)_\mu=\deg(\VJ)=3$ for all $ \mu \geq 1.$ It follows that
$$\dim_kH_\m^1(R/I^\sat)_\mu=0,\; \forall \mu \geq 2, \; \dim_kH_\m^1(R/I^\sat)_1=3.$$
		
Therefore $\regu(R/I^\sat)=2,$ hence $I^\sat$ is generated in degree at most 3. Since $\indeg(I^\sat)=3$, it is immediate that $I^\sat$ is exactly generated in degree 3.  Furthemore,
$$\dim_k(I^\sat)_3=\dim_k R_3 -\dim_k (R/I^\sat)_3=10-6=4,$$
which implies  $I^\sat=I.$ \\
		
By the Hilbert-Burch Theorem, $I$ admits a free resolution of the form
$$\xymatrix {0\ar[r] & R(-4)^3\ar[r]^{M}& R(-3)^4\ar[r]& R\ar[r]& R/I\ar[r]& 0.}$$
By changing the coordinates, there is no loss of generality in assuming the matrix $M$ be of the form
$$M=\pmt{X_1& 0& 0\\ a_1 X_2& X_2& 0\\ b_1 X_3& b_2 X_3& X_3\\ c_1 L& c_2 L& c_3L}$$
where $L=\alpha_1 X_1 + \alpha_2 X_2+\alpha_3 X_3.$ One has $c_3\neq 0$ since $f_i\neq 0$ for all $i=0,1,2,3.$ Since $I$ is locally a complete intersection, $V(I_2(M)) = \emptyset,$ where $I_2(M)$ is the ideal of $(2\times 2)$-minors of $M$. It follows that $\alpha_1\alpha_2\alpha_3\neq 0$.\\
		
Let now a fiber over a closed point $\point$  of coordinates $(\lambda_1:\lambda_2:\lambda_3:\lambda_4)\in \PP_k^3$ be of dimension 1 and its unmixed component be defined by $h_\point\in R$. It follows from \cite[Lemma~10]{Botbol-Buse_Chardin14} that
$$h_\point=\gcd(\ell_1,\ell_2,\ell_3),$$
where
\begin{align*}
\ell_1&=(\lambda_1+c_1\alpha_1\lambda_4)X_1+(a_1\lambda_2+c_1\alpha_2\lambda_4)X_2+(b_1\lambda_3+c_1\alpha_3\lambda_4)X_3,\\
\ell_2&=c_2\alpha_1\lambda_4X_1+(\lambda_2+c_2\alpha_2\lambda_4)X_2+(b_2\lambda_3+c_2\alpha_3\lambda_4)X_3,\\
\ell_3&=c_3\alpha_1\lambda_4X_1+c_3\alpha_2\lambda_4X_2+(\lambda_3+c_3\alpha_3\lambda_4)X_3.
\end{align*}
$\bullet $ If $\lambda_4=0,$ then there are the following cases:
\begin{enumerate}
\item[Case 1:] $\lambda_3\neq 0.$ In this case we have $\lambda_1=\lambda_2=0$. Therefore $\point=(0:0:1:0)$ and $h_\point =X_3.$
\item[Case 2:] $\lambda_3 = 0$ and $\lambda_2\neq 0$. It follows that $\lambda_1=0$. Thus $\point=(0:1:0:0)$ and $h_\point =X_2.$
\item[Case 3:]  $\lambda_3 = 0$ and $\lambda_2= 0$. We must have $\lambda_1\neq 0$. Hence  $\point=(1:0:0:0)$ and $h_\point =X_1.$
\end{enumerate}
$\bullet $ If $\lambda_4=1,$ then from three polynomials $\ell_1, \ell_2,\ell_3$ have a common divisor, we have
\begin{align*}
&\frac{\lambda_1+c_1\alpha_1}{c_3\alpha_1}=\frac{a_1\lambda_2+c_1\alpha_2}{c_3\alpha_2} =\frac{b_1\lambda_3+c_1\alpha_3}{\lambda_3+c_3\alpha_3},\\
&\frac{c_2}{c_3}=\frac{\lambda_2+c_2\alpha_2}{c_3\alpha_2} =\frac{b_2\lambda_3+c_2\alpha_3}{\lambda_3+c_3\alpha_3}.
\end{align*}
These equalities imply that $\lambda_1=\lambda_2=0.$ Furthermore, we also have $\lambda_3=0$. Indeed, if $\lambda_3 \neq 0$, then $c_1=b_1c_3$ and $c_2=b_2c_3$, contrary to $f_i\neq 0$ for all $i=0,\ldots, 3$. Therefore $\point=(0:0:0:1)$ and $h_\point =L.$
\end{proof}
	
\begin{exem}   Consider the parameterization given by
\begin{align*}
f_0= X_2X_3(X_1+X_2+X_3),\qquad &f_2=X_1X_2(X_1+X_2+X_3),\\
f_1=X_1X_3(X_1+X_2+X_3),\qquad &f_3=X_1X_2X_3.
\end{align*}

Using  {\tt Macaulay2} \cite{Macaulay2}, we see that $\VI$ is locally a complete intersection of degree $6$ and that $I$  is a saturated ideal with free resolution
$$\xymatrix {0\ar[r] & R(-4)^3\ar[r]^{M}& R(-3)^4\ar[r]& R\ar[r]& R/I\ar[r]& 0,}$$
where the matrix $M$ is given by
$$\pmt{ 0& 0& X_1\\ X_2& -X_2 & 0 \\-X_3& 0& 0\\ 0 & X_1+X_2+X_3& -(X_1+X_2+X_3) } .$$
Thus we obtain $\Y=\{\point_1,\point_2,\point_3,\point_4\}$	with
\begin{align*}
\point_1& =(1:0:0:0), \quad \quad h_{\point_1}=X_1,\qquad\point_2=(0:1:0:0),\qquad h_{\point_2}=X_2,\\
\point_3 & =(0:0:1:0), \quad \quad h_{\point_3}  =X_3,\qquad \point_4=(0:0:0:1),\;\quad h_{\point_4}=X_1+X_2+X_3.
\end{align*}
Consequently, we have
$$\sum_{\point\in \Y}\deg(h_\point)=4.$$
\end{exem}

Now we state our main result.
\begin{theo} 
Let $I$ be a homogeneous ideal of $R$ such that $\indeg(I^\sat)=\indeg(I)=d\geq 4.$ If $\VI=\Proj(R/I)$ is locally a complete intersection of dimension zero, then
$$\sum_{\point\in \Y}\deg(h_\point)\leq \left \lfloor \frac{d}{2} \right \rfloor  d-1$$
where $\lfloor x \rfloor:=\max \{ n\in \Z\, |\, n\leq x \}.$
\end{theo}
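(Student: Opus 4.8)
The plan is to invoke Proposition \ref{proposition1} for the single well-chosen exponent $s=\lfloor d/2\rfloor$. Specifically, I would show that the graded piece $(I^s)^\sat_{sd-1}$ is nonzero for this $s$; since then $\indeg((I^s)^\sat)\leq sd-1<sd$, Proposition \ref{proposition1} immediately gives
$$\sum_{\point\in\Y}\deg(h_\point)\leq \indeg((I^s)^\sat)\leq sd-1=\left\lfloor\frac{d}{2}\right\rfloor d-1,$$
which is precisely the asserted bound. Thus everything reduces to the single nonvanishing statement $(I^s)^\sat_{sd-1}\neq 0$.

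To prove that nonvanishing I would substitute $s=\lfloor d/2\rfloor$ into the identity of Proposition \ref{proposition17} and rearrange it as
$$\dim_k (I^s)^\sat_{sd-1}=\dim_k H_\m^2(I^s)_{sd-1}-\frac{1}{2}s(s+1)\deg(\VI)+\frac{1}{2}sd(sd+1).$$
Dropping the nonnegative term $\dim_k H_\m^2(I^s)_{sd-1}$ and inserting the upper bound $\deg(\VI)\leq d^2-2d+3$ from Lemma \ref{lemme21} (the extremal degree is the worst case, since $\deg(\VI)$ enters with a negative sign) yields
$$\dim_k (I^s)^\sat_{sd-1}\geq \frac{1}{2}sd(sd+1)-\frac{1}{2}s(s+1)(d^2-2d+3)=\frac{s}{2}\bigl(s(2d-3)-(d^2-3d+3)\bigr).$$

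It then remains to verify that the last expression is strictly positive, equivalently that $s(2d-3)>d^2-3d+3$, for $s=\lfloor d/2\rfloor$ and every $d\geq 4$. The cleanest way is to split by parity: for $d=2t$ one has $s=t$ and the inequality simplifies to $3t>3$, i.e. $d>2$; for $d=2t+1$ one has $s=t$ and it simplifies to $t>1$, i.e. $d\geq 5$. These two cases together cover all $d\geq 4$, so the lower bound is a positive rational number and the integer $\dim_k (I^s)^\sat_{sd-1}$ is at least $1$, completing the argument.

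The conceptual machinery is already in place through Proposition \ref{proposition17} (built from the approximation complex and its two spectral sequences) and the degree estimate of Lemma \ref{lemme21}, so the only genuine obstacle is the calibration of the exponent. The key observation I expect to drive the proof is that the positivity threshold $\frac{d^2-3d+3}{2d-3}=\frac{d}{2}-\frac{3(d-2)}{2(2d-3)}$ sits just below $d/2$, so that $s=\lfloor d/2\rfloor$ is exactly the smallest exponent making $(I^s)^\sat_{sd-1}$ nonzero; this is what forces the bound to come out as $\lfloor d/2\rfloor d-1$ rather than something weaker. The parity bookkeeping and the comparison against the extremal value $\deg(\VI)=d^2-2d+3$ are the only points requiring attention.
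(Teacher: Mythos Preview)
Your proposal is correct and follows essentially the same approach as the paper: reduce to showing $(I^s)^{\sat}_{sd-1}\neq 0$ for $s=\lfloor d/2\rfloor$ via Proposition~\ref{proposition1}, then combine Proposition~\ref{proposition17} with the bound $\deg(\VI)\leq d^2-2d+3$ from Lemma~\ref{lemme21}, splitting by parity. The only cosmetic difference is that the paper phrases the last step by contradiction (assuming the saturation piece vanishes and deriving $\dim_k H_\m^2(I^s)_{sd-1}<0$), whereas you drop the nonnegative $H_\m^2$ term and bound the saturation piece from below; these are the same inequality read in opposite directions.
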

\begin{proof} Set $\nu = \lfloor \frac{d}{2}\rfloor $. By Proposition~\ref{Proposition4.1}, it suffies to show that $(I^\nu)^\sat_{\nu d-1}\neq 0$. Suppose on the contrary that  $(I^\nu)^\sat_{\nu d-1}=0$.  By  Proposition~\ref{Proposition4.5}, one has
$$\dim_k H_\m^2(I^\nu)_{\nu d-1}= \frac{1}{2}\nu(\nu+1)\deg(\VI)-\frac{1}{2}\nu d(\nu d+1).$$
We now show that this is not possible since $\deg(\VI)\leq d^2-2d+3$ by Lemma~\ref{Lemma4.4}. Indeed, consider two cases:
\begin{enumerate}
\item[(i)] If $d=2s$ with $s\geq 2$, then $\nu= \lfloor \frac{d}{2}\rfloor = s$. Therefore 
\begin{align*}
\dim_k H_\m^2(I^s)_{sd-1}& = \frac{1}{2}s\big((s+1)\deg(\VI) -2s(2s^2+1)\big)\\
&\leq \frac{1}{2}s \big((s+1)(4s^2-4s+3) -2s(2s^2+1)\big)\\
&\leq -\frac{3}{2}s(s-1)<0,
\end{align*}
which is impossible.
\item[(ii)]  Similarly, if $d=2s+1$ with $ s\geq 2$, then $\nu= \lfloor \frac{d}{2}\rfloor = s$. Thus $\dim_k H_\m^2(I^s)_{sd-1}\leq -\frac{1}{2}s(s-1)<0,$ 	which is impossible.
\end{enumerate}
\end{proof}
	
\begin{exem} Let $d\geq 4 $ be an integer. Consider the parameterization given by
\begin{align*}
\begin{array}{ccc}
f_0= X_1^{d-3}X_2(X_1^2-X_2^2),&&f_2=X_1^{d-3}X_3(X_2^2-X_3^2),\\
f_1=X_1^{d-3}X_3(X_1^2-X_2^2),&&f_3=X_2^{d-3}X_3(X_2^2-X_3^2).
\end{array}
\end{align*}
We see that $\VI$ is locally a complete intersection of degree $d^2-3d+7$. Moreover, $I$ is a saturated ideal with free resolution
$$\xymatrix {0\ar[r] & R(-d-1)\oplus R(-d-2)\oplus R(-2d+3)\ar[r]^{\qquad \qquad\qquad M}& R(-d)^4\ar[r]& R\ar[r]& R/I\ar[r]& 0,}$$
where the matrix $M$ is given by
$$\pmt{-X_3& 0& 0\\ X_2& X_2^2-X_3^2& 0\\ 0& -X_1^2 +X_2^2& -X_2^{d-3}\\ 0& 0& X_1^{d-3}} .$$
We consider the following cases.\\
		
\underline{Case 1:}  if $d-3$ is odd, we obtain  $\Y=\{\point_1,\point_2,\point_3,\point_4,\point_5,\point_6\}$
with
\begin{align*}
\point_1& =(1:0:0:0), \quad \quad h_{\point_1}=X_3,\qquad\quad\;\point_2=(0:0:0:1),\qquad h_{\point_2}=X_1^{d-3},\\
\point_3 & =(1:1:0:0), \quad \quad h_{\point_3}  =X_2-X_3,\quad \point_4=(1:-1:0:0),\quad h_{\point_4}=X_2+X_3,\\
\point_5 & =(0:0:1:1),  \quad\quad h_{\point_5}=X_1-X_2, \quad 	\point_6  =(0:0:1:-1),  \quad h_{\point_6}=X_1+X_2.
\end{align*}
		
\underline{Case 2:} if $d-3$ is even, we have $\Y=\{\point_1,\point_2,\point_3,\point_4,\point_5\}$
with
\begin{align*}
\point_1& =(1:0:0:0), \quad \quad h_{\point_1}=X_3,\qquad\quad\;\point_2=(0:0:0:1),\qquad h_{\point_2}=X_1^{d-3},\\
\point_3 & =(1:1:0:0), \quad \quad h_{\point_3}  =X_2-X_3,\quad \point_4=(1:-1:0:0),\quad h_{\point_4}=X_2+X_3,\\
\point_5 & =(0:0:1:1), \quad\quad h_{\point_5}=X_1^2-X_2^2.
\end{align*}\\
		
Therefore
$$\sum_{\point\in \Y}\deg(h_\point)=d+2.$$
\end{exem}
\begin{ques}
Is $\sum_{\point\in \Y}\deg(h_\point)$ bounded  linearly in the degree $d$?
\end{ques}

\section*{Acknowledgments}
This work was done as a part of the author’s Ph.D. thesis at Pierre and Marie Curie University (UPMC), France. He would like to thank his advisor, Professor Marc Chardin, for suggesting the problem, for his enormous help, for being so supportive and his careful reading of the manuscript.


\begin{thebibliography}{BCD03}
	

\bibitem[BBC14]{Botbol-Buse_Chardin14}
Nicol\'as Botbol  and Laurent Bus\'e and Marc Chardin.
\newblock  {\em Fitting ideals and multiple points of surface parameterizations.}
\newblock  Journal of Algebra, \textbf{420}:486--508, 2014.
	
\bibitem[BC05]{Buse-Chardin05}
Laurent Bus\'e and Marc Chardin.
\newblock  {\em Implicitizing rational hypersurfaces using approximation complexes.}
\newblock Journal of  Symbolic Computation, \textbf{40}(4-5):1150--1168, 2005.

	
\bibitem[BCD03]{BCD2003}
Laurent Bus{\'e}, David Cox, and Carlos D'Andrea.
\newblock {\em Implicitization of surfaces in {${\mathbb P}\sp 3$} in the presence
of base points.}
\newblock  J. Algebra Appl., \textbf{2}(2):189--214, 2003.
	
\bibitem[BCJ09]{Buse-Chardin-Jouanolou09}
Laurent Bus\'e and Marc Chardin and Jean-Pierre Jouanolou.
\newblock {\em Torsion of the symmetric algebra and implicitization.}
\newblock  Pro. Amer. Math. Soc., \textbf{137}(6):189--214, 2009.
	
\bibitem[BH93]{Bruns-Herzog93}
Winfried Bruns and J\"urgen Herzog.
\newblock {\em Cohen-Macaulay rings}.
\newblock Volume 39 of Cambridge Studies in Advanced Mathematics.
\newblock Cambridge University Press, Cambridge, 1993.

	
\bibitem[BJ03]{Buse-Jouanolou03}
Laurent Bus{\'e} and Jean-Pierre Jouanolou.
\newblock {\em On the closed image of a rational map and the implicitization problem.}
\newblock J. Algebra, \textbf{265}(1):312--357, 2003.
	
\bibitem[Bus01]{Buse2001}
Laurent Bus\'e.
\newblock {\em Étude du résultant sur une variété algébrique.}
\newblock Ph.D. Thesis, Université de Nice Sophia-Antipolis, 2001.

\bibitem[Bus14]{Buse2014}
Laurent Bus\'e.
\newblock {\em Implicit matrix representations of rational B\'ezier curves and surfaces.}
\newblock Comput.-Aided Des., \textbf{46}:14--24, 2014.
	

\bibitem[CGZ00]{Cox-Goldman-Zhang2000}
David Cox and Ronald Goldman and Ming Zhang.
\newblock {\em On the validity of implicitization by moving quadrics of rational surfaces with no base points.}
\newblock J. Symbolic Comput., \textbf{29}(3):419--440, 2000.

\bibitem[Cha06]{Chardin2006}
Marc Chardin.
\newblock {\em Implicitization using approximation complexes.}
\newblock Algebraic geometry and geometric modeling.
\newblock Part of the series Mathematics and Visualization, 23--35, 2006.


\bibitem[Cox01]{Cox2001}
David A Cox.
\newblock {\em Equations of parametric curves and surfaces via syzygies.}
\newblock Symbolic Computation: Solving Equations in Algebra, Geometry, and Engineering. 
\newblock Contemporary Mathamatics, Amer. Math. Soc., \textbf{286}:1--20, 2001.
	

\bibitem[Har77]{Hartshorne77}
Robin Hartshorne.
\newblock {\em Algebraic geometry}.
\newblock Graduate Texts in Mathematics  \textbf{52}.
\newblock Springer-Verlag, New York-Heidelberg, 1977.

\bibitem[Hof89]{Hoffmann89}
Christoph M. Hoffmann.
\newblock {\em Geometric and Solid Modeling: an Introduction}.
\newblock Morgan Kaufmann Publishers, Inc., 1989

\bibitem[HSV82]{HSV82}
J\"urgen~Herzog and Aron~Simis and Wolmer~V.~Vasconcelos.
\newblock  {\em Approximation complexes of blowing-up rings.}
\newblock J. Algebra, \textbf{74}(2):466--493, 1982.

\bibitem[HSV83]{HSV1983}
J\"urgen~Herzog and Aron~Simis and Wolmer~V.~Vasconcelos.
\newblock  {\em Approximation complexes of blowing-up rings. II.}
\newblock J. Algebra, \textbf{82}(1):53--83, 1983.

\bibitem[Jou96]{Jouanolou96}
Jean-Pierre Jouanolou.
\newblock {\em Résultant anisotrope: Compléments et applications.}
\newblock Electron. J. Combin. 3, Research Paper 2, 1996.

\bibitem[LC16]{Lai-Chen16}
Yisheng	Lai and Falai Chen.
\newblock {\em Implicitizing rational surfaces using moving quadrics constructed from moving planes.}
\newblock J. Symbolic Comput., \textbf{77}:127--161, 2016.



\bibitem[Mac2]{Macaulay2}
Daniel~R. Grayson and Michael~E. Stillman.
\newblock Macaulay 2, a software system for research in algebraic geometry.
\newblock Available at http://www.math.uiuc.edu/Macaulay2/.


\bibitem[SC95]{Tom-Chen95}
Tom Sederberg and Falai Chen.
\newblock {\em Implicitization using moving curves and surfaces.}
\newblock Computer Graphics Annual Conference Series, \textbf{303}:301--308, 1995.

\bibitem[Sta78]{Stanley1978}
Richard  P. Stanley.
\newblock {\em Hilbert functions of graded algebras.}
\newblock Advances in Mathematics, \textbf{28}:57--83, 1978.


\bibitem[ZS60]{Zariski1960}
Oscar Zariski and Pierre Samuel.
\newblock {\em Commutative Algebra with the cooperation of I.S. Cohen.}
\newblock Volume II of Graduate Texts in Mathematics,
\newblock Springer-Verlag, New York-Heidelberg, 1960.

	
\end{thebibliography}


\end{document}